\newtheorem{thm}{Theorem}[section]
\newtheorem*{theorem}{Theorem}
\newtheorem{defn}[thm]{Definition}
\newtheorem{dig}[thm]{Digression}
\newtheorem{lemma}[thm]{Lemma}
\newtheorem{cor}[thm]{Corollary}
\newtheorem{rmk}[thm]{Remark}
\newtheorem{conj}[thm]{Conjecture}
\newtheorem{fact}[thm]{Fact}
\newtheorem{question}[thm]{Question}
\newcommand\e\epsilon
\def \u{\mathcal U}
\newcommand{\cstar}{$\mathrm{C}^*$}
\def\indsym#1#2{%
  \setbox0=\hbox{$\m@th#1x$}%
  \kern\wd0%
  \hbox to 0pt{\hss$\m@th#1\mid$\hbox to 0pt{$\m@th#1^{#2}$}\hss}%
  \lower.9\ht0\hbox to 0pt{\hss$\m@th#1\smile$\hss}%
  \kern\wd0}
\def\nindsym#1#2{%
  \setbox0=\hbox{$\m@th#1x$}%
  \kern\wd0%
  \hbox to 0pt{\hss$\m@th#1\not$\kern1.4\wd0\hss}
  \hbox to 0pt{\hss$\m@th#1\mid$\hbox to 0pt{$\m@th#1^{\,#2}$}\hss}%
  \lower.9\ht0\hbox to 0pt{\hss$\m@th#1\smile$\hss}%
  \kern\wd0}
\def\dotminussym#1#2{%
  \setbox0=\hbox{$\m@th#1-$}%
  \kern.5\wd0%
  \hbox to 0pt{\hss\hbox{$\m@th#1-$}\hss}%
  \raise.6\ht0\hbox to 0pt{\hss$\m@th#1.$\hss}%
  \kern.5\wd0}
\def \R{\mathcal R}
\title{On Popa's Factorial Commutant Embedding Problem}
\author{Isaac Goldbring}
\thanks{I. Goldbring was partially supported by NSF CAREER grant DMS-1349399.}
\address{Department of Mathematics\\University of California, Irvine, 340 Rowland Hall (Bldg.\# 400),
Irvine, CA 92697-3875}
\email{isaac@math.uci.edu}
\urladdr{http://www.math.uci.edu/~isaac}
\begin{document}

\begin{abstract}
An open question of Sorin Popa asks whether or not every $\R^\u$-embeddable factor admits an embedding into $\R^\u$ with factorial relative commutant.  We show that there is a locally universal McDuff II$_1$ factor $M$ such that every property (T) factor admits an embedding into $M^\u$ with factorial relative commutant.  We also discuss how our strategy could be used to settle Popa's question for property (T) factors if a certain open question in the model theory of operator algebras has a positive solution.
\end{abstract}

\maketitle

\section{Introduction}

In this note, all II$_1$ factors are assumed to be separable unless they are ultrapowers.  $\R$ denotes the hyperfinite II$_1$ factor.  $\u$ denotes an arbitrary nonprincipal ultrafilter on $\mathbb N$.  We say that a factor is \textbf{embeddable} if it embeds into $\R^\u$.  In order to avoid any set-theoretic subtleties, we also assume that the Continuum Hypothesis (CH) holds.\footnote{It would be interesting to investigate if any of our results depend on set theory}.

The starting point of this note is the following question of Popa:

\begin{question}[The factorial commutant embedding problem (FCEP)]
Suppose that $N$ is an embeddable factor.  Is there an embedding $\pi:N\hookrightarrow \R^\u$ such that $\pi(N)'\cap \R^\u$ is a factor?
\end{question}

The question is known to have a positive answer in some cases, e.g. $N=\R$ \cite[Proposition 12]{DL} and $N=\operatorname{SL}_3(\mathbb Z)$ \cite[Section 1.7]{popa}, but seems to be wide-open in general.  The question itself even seems to be open for the class of property (T) factors.

The main result of this note, proven in Section 2, is that there is a McDuff II$_1$ factor making the conclusion of the FCEP true for all property (T) factors:

\begin{theorem}
There is a locally universal McDuff II$_1$ factor $M$ such that, for any property (T) factor $N$, there is an embedding $\pi:N\hookrightarrow M^\u$ such that $\pi(N)'\cap M^\u$ is a factor.
\end{theorem}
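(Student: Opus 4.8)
The plan is to build a single McDuff factor $M$ that is simultaneously locally universal and "generic" enough that every property (T) factor embeds into $M^\u$ with factorial commutant. The key leverage is that property (T) is a rigidity condition: if $N$ has property (T), then any embedding of $N$ into an ultrapower is, up to unitary conjugacy, determined by finitely many approximate relations, and crucially embeddings are rigid in the sense that nearby embeddings are unitarily conjugate. This rigidity is exactly what lets one control the relative commutant.

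First I would recall the standard reformulation of the factoriality of the relative commutant in terms of the relevant ultrapower: $\pi(N)'\cap M^\u$ is a factor if and only if its center is trivial, and one detects non-central elements through a spectral-gap / double-commutant argument. The plan is to realize $M$ as a suitable ultraproduct-like object, built as an elementary chain or by a direct limit construction, so that $M$ absorbs $\R$ tensorially (giving McDuff) and contains a copy of every separable embeddable factor (giving local universality). Since we are assuming CH, $M^\u$ is saturated in the continuous model-theoretic sense, and we may use back-and-forth / saturation arguments freely to produce embeddings with prescribed properties.

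The main step is then a genericity argument for the embedding $\pi$. For a property (T) factor $N$, I would enumerate the potential "obstructions" to factoriality of $\pi(N)'\cap M^\u$ — namely, candidate central projections of the relative commutant — and, using the McDuff structure together with property (T) rigidity, show that one can perturb or choose $\pi$ to kill each such projection. Concretely, given any nontrivial central projection $p$ of $\pi(N)'\cap M^\u$, the goal is to find a unitary $u\in M^\u$ commuting appropriately with $\pi(N)$ that moves $p$, contradicting centrality; property (T) guarantees that such unitaries, produced approximately, can be promoted to exact ones because $N$ has no nontrivial small deformations. The McDuff/tensor-splitting structure supplies an abundant reservoir of such unitaries (e.g. coming from a tensor factor of $\R$ that commutes with the image of $N$), which is where local universality and the McDuff property are jointly used.

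The hard part will be producing, for each candidate central element, a unitary in $M^\u$ that both commutes with $\pi(N)$ (so it lies in the relative commutant and can test centrality) and fails to commute with the given central projection — and doing so \emph{uniformly} enough that the choices can be amalgamated into a single embedding $\pi$ working for all central projections at once. This is where property (T) is essential: it converts the approximate, "$\e$-close" commutation relations extracted from saturation into genuine commutation, so that the intertwining unitary produced by a spectral-gap argument can be taken exactly inside $\pi(N)'\cap M^\u$ rather than merely approximately. I expect the construction of $M$ itself (balancing local universality, the McDuff property, and enough homogeneity for the back-and-forth) to be routine given the model-theoretic machinery, and the genuine obstacle to be this passage from approximate to exact intertwiners, which property (T) is precisely designed to overcome.
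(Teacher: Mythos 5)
Your proposal identifies the right target (kill the center of the relative commutant) but the mechanism you describe does not close, and the actual content of the paper's argument is absent. The step ``given a nontrivial central projection $p$ of $\pi(N)'\cap M^\u$, find a unitary $u\in M^\u$ commuting with $\pi(N)$ that moves $p$'' is circular as stated: such a $u$ lies in $\pi(N)'\cap M^\u$, and $p$ is central there precisely when every such unitary fixes it under conjugation, so producing the desired $u$ is exactly the assertion to be proved, not a tool. Your fallback --- perturbing $\pi$ to kill each candidate $p$ and then ``amalgamating'' the perturbations --- has no convergence mechanism, since each perturbation changes the relative commutant (and hence the list of candidates) wholesale, and there are continuum many candidates. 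You also never reduce from $\pi(N)'\cap M^\u$ to a separable object. The paper's route is entirely different: property (T) gives w-spectral gap, i.e. $N'\cap M^\u=(N'\cap M)^\u$, so it suffices to show that $N'\cap M$ itself is a factor; and this follows from the bicommutant identity $(N'\cap M)'\cap M=N$, valid whenever $N$ is a w-spectral gap subfactor of an existentially closed $M$ (a nontrivial input from \cite{Gold2}, proved via the amalgamated free product $M*_N(N\otimes L(\mathbb Z))$), since then any element of the center of $N'\cap M$ lies in $N\cap N'=\mathbb C$. Your sketch contains no analogue of this identity, which is where the real work lives; the rigidity-of-embeddings and approximate-to-exact-intertwiner features of property (T) that you emphasize are not what the paper uses.

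There is a second gap you gloss over: obtaining a \emph{single} $M$ that works for all property (T) factors simultaneously. The argument above only shows that each $N$ sits with factorial relative commutant inside the ultrapower of \emph{some} e.c. factor containing it. The paper resolves this by passing to the class of infinitely generic factors, which is extensive and model-complete, hence consists of e.c. factors any two of which are elementarily equivalent; under CH this means isomorphic ultrapowers, so the factorial embedding transfers to one fixed infinitely generic $M$. Saturation and back-and-forth alone, as invoked in your sketch, do not give this: it is open (and believed unlikely) that all e.c. factors are elementarily equivalent, so some specific subclass with this property must be exhibited.
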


We recall that a \textbf{locally universal} factor is one whose ultrapower contains all (separable) II$_1$ factors.  Locally universal factors were first shown to exist in \cite[Example 6.4(2)]{modeloperator3}, thus providing a ``poor man's resolution'' to the Connes Embedding Problem (CEP).  Thus, in some sense, our theorem is a ``poor man's resolution'' to the FCEP for property (T) factors.

Recently, a negative solution to the CEP was announced in \cite{quantum}; if correct, it would imply that a locally universal factor is not embeddable.  It thus makes sense to wonder whether or not $M$ as in the previous theorem can be taken to be embeddable if one restricts attention to embeddable property (T) factors; we discuss a hurdle to this being true in Section 3, where we also discuss how the success of this approach to settle the FCEP for property (T) factors is connected to an open question about so-called infinitely generic embeddable factors.

Popa's question was given a geometric reformulation by Nate Brown in \cite[Proposition 5.2]{Brown}, who showed that an embedding $\pi:N\hookrightarrow \R^\u$ has factorial commutant if and only if $[\pi]$ is an extreme point in the convex-like space $\operatorname{Hom}(N,\R^\u)$ of embeddings of $N$ into $\R^\u$ modulo unitary equivalence.  Scott Atkinson \cite[Theorem 5.4]{Scott} showed a similar result when $\R$ is replaced by a McDuff factor.  Consequently, our result shows that, for the $M$ as in the above theorem, $\operatorname{Hom}(N,M^\u)$ has an extreme point for any property (T) factor $N$.

Our proofs use ideas from model theory although we do our best to provide logic-free definitions of the main concepts.  In fact, the proof of the main theorem is mainly obtained by combining results from our earlier works \cite{ecfactor} and \cite{Gold2}.

We would like to thank Scott Atkinson and Srivatsav Kunnawalkam Elayavalli for bringing Popa's question to our attention and for useful conversations regarding this work.  We would also like to thank Sorin Popa for providing historical context for his question and for providing us with some references.

\section{The main theorem}

We recall the following definition:

\begin{defn}
Suppose that $M$ is a II$_1$ factor with a subfactor $N$.  We say that $N$ has \textbf{w-spectral gap in $M$} if $N'\cap M^\u=(N'\cap M)^\u$.
\end{defn}

We remind the reader that property (T) factors have w-spectral gap in any extension.  

We will need the following notion from model theory, defined in ultrapower\footnote{Read:  operator algebraist-friendly} terms.

\begin{defn}
If $M$ is a subfactor of the factor $Q$, we say that $M$ is \textbf{existentially closed in $Q$} if there is an embedding $Q\hookrightarrow M^\u$ that restricts to the diagonal embedding $M\hookrightarrow M^\u$.  We say that the II$_1$ factor $M$ is \textbf{existentially closed (e.c.)} if it is existentially closed in all extensions.
\end{defn}

The following is \cite[Section 2]{nomodcomp}.
\begin{fact}
An e.c. factor is locally universal and McDuff.
\end{fact}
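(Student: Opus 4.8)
The plan is to establish both conclusions—local universality and the McDuff property—separately, using the defining lifting property of an e.c.\ factor against carefully chosen extensions.

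For local universality, let $M$ be e.c.\ and let $N$ be any separable \text{II}$_1$ factor. By amalgamating over $\mathbb{C}$ (or using the fact that any two \text{II}$_1$ factors embed into a common factor, e.g.\ via the tracial free product $M * N$), I would produce an extension $Q \supseteq M$ that also contains a copy of $N$. Since $M$ is e.c.\ in $Q$, there is an embedding $Q \hookrightarrow M^\u$ restricting to the diagonal map on $M$. Composing the inclusion $N \hookrightarrow Q$ with this embedding yields an embedding $N \hookrightarrow M^\u$, so $M^\u$ contains a copy of every separable \text{II}$_1$ factor; that is, $M$ is locally universal.

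For the McDuff property, I would exploit the fact that $M$ embeds into its extension $M \mathbin{\bar\otimes} \R$. Applying existential closedness to $Q = M \mathbin{\bar\otimes} \R$ gives an embedding $\iota: M \mathbin{\bar\otimes} \R \hookrightarrow M^\u$ whose restriction to $M \cong M \mathbin{\bar\otimes} 1$ is the diagonal embedding. The image of the factor $1 \mathbin{\bar\otimes} \R$ then lands in the relative commutant of the diagonal copy of $M$ inside $M^\u$, i.e.\ in $M' \cap M^\u$. Producing a copy of $\R$ commuting with $M$ inside $M^\u$ is exactly a hyperfinite subfactor witnessing the McDuff property; one then checks (via the standard characterization, e.g.\ that $M' \cap M^\u$ is noncommutative, equivalently that $M$ tensorially absorbs $\R$) that this forces $M \cong M \mathbin{\bar\otimes} \R$.

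The main obstacle is the second step: having a copy of $\R$ inside $M' \cap M^\u$ is, a priori, weaker than the McDuff condition $M \cong M \mathbin{\bar\otimes} \R$, so I would need to invoke the correct characterization of McDuffness in terms of the relative commutant in the ultrapower. The cleanest route is to recall that a separable \text{II}$_1$ factor $M$ is McDuff precisely when $M' \cap M^\u$ is noncommutative (a theorem of McDuff), and to verify that the embedding $\iota$ above yields noncommuting elements in $M' \cap M^\u$—which is immediate since $\R$ itself is noncommutative and $\iota$ is faithful. The verification that $\iota(1 \mathbin{\bar\otimes} \R)$ genuinely commutes with the diagonal $M$ is the one point requiring care, and it follows because $\iota$ restricts to the diagonal on $M \mathbin{\bar\otimes} 1$ while $1 \mathbin{\bar\otimes} \R$ commutes with $M \mathbin{\bar\otimes} 1$ inside $Q$, a property preserved by the embedding.
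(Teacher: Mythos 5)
Your proof is correct and is essentially the standard argument: the paper does not prove this fact itself but cites \cite{nomodcomp}, where exactly this strategy is used (embed $M * N$, resp.\ $M\otimes\R$, into $M^\u$ over the diagonal copy of $M$, and observe that the second tensor/free factor lands in $M'\cap M^\u$). The one point you flag as needing care---upgrading a noncommutative $M'\cap M^\u$ to $M\cong M\otimes\R$---is precisely McDuff's theorem for separable II$_1$ factors, which you correctly invoke, so there is no gap.
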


\begin{defn}
A class $\mathcal C$ of II$_1$ factors is said to be \textbf{extensive}\footnote{In the model-theoretic literature, one would say that $\mathcal C$ is model-consistent with the class of II$_1$ factors.  We prefer the above terminology.} if every II$_1$ factor embeds in an element of $\mathcal C$.
\end{defn}

The following is well-known (see, e.g. \cite[Fact 2.8]{usvy}).

\begin{fact}
The class of e.c. factors is extensive.
\end{fact}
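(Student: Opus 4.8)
The plan is to run the standard model-theoretic argument that existentially closed models are cofinal in an inductive class, being careful that every object we produce is a genuine (separable) II$_1$ factor. The two structural facts this rests on are that the class of separable II$_1$ factors is closed under unions of increasing chains (after tracial completion) and that it admits amalgamation over a common subfactor; both are standard. So it suffices to fix an arbitrary II$_1$ factor $N_0$ and produce an e.c.\ factor $M\supseteq N_0$.

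The heart of the construction is a one-step lemma: for every separable factor $M$ there is a separable extension $M^+\supseteq M$ in which every existential condition over $M$ (i.e.\ with parameters from $M$) that is satisfiable in some extension of $M$ is already approximately realized. To build $M^+$ I would use that, since $M$ is separable, the space of existential conditions over $M$ is separable in the logic topology, so it is enough to realize a countable dense family $\{\varphi_i\}$ of satisfiable conditions; each $\varphi_i$ is witnessed in some separable extension $Q_i\supseteq M$. I would then form a chain $M=P_0\subseteq P_1\subseteq\cdots$ where $P_{i+1}$ is obtained by amalgamating $P_i$ with $Q_i$ over $M$ (taking an amalgamated free product and then, to guarantee factoriality, a free product with $L(\mathbb F_2)$, which keeps one inside the separable II$_1$ factors and preserves the embeddings of $P_i$ and $Q_i$). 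Setting $M^+:=\overline{\bigcup_i P_i}^{\,\|\cdot\|_2}$, the witness for each $\varphi_i$ survives into $M^+$, and by density and continuity $M^+$ realizes every satisfiable existential condition over $M$.

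Iterating the one-step lemma, I would build a chain $N_0=M_0\subseteq M_1\subseteq\cdots$ with $M_{n+1}:=M_n^+$ and set $M:=\overline{\bigcup_n M_n}^{\,\|\cdot\|_2}$, a separable II$_1$ factor. To see that $M$ is e.c., let $Q\supseteq M$ be any extension and let $\varphi(\bar x,\bar a)$ be an existential condition with parameters $\bar a\in M$ realized in $Q$. Since $\bar a$ lies in the trace-norm closure of $\bigcup_n M_n$, I can approximate it by a tuple from some $M_n$; the resulting condition over $M_n$ is realized in $Q\supseteq M_n$, hence approximately realized in $M_{n+1}\subseteq M$ by construction, and a continuity argument transfers this back to $\varphi(\bar x,\bar a)$. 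Thus $M$ approximately realizes every existential condition over itself that is satisfiable in an extension. Finally I would pass to the ultrapower formulation demanded by the definition: each finite piece of the existential diagram of a given extension $Q$ over $M$ is such a satisfiable condition, hence approximately realized in $M$, so $\aleph_1$-saturation of $M^\u$ realizes the whole diagram and yields an embedding $Q\hookrightarrow M^\u$ that restricts to the diagonal embedding of $M$ — exactly what it means for $M$ to be existentially closed in $Q$.

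The main obstacle is not the combinatorics of the chain but verifying the two closure properties under the operations used: amalgamation over a common subfactor in the one-step lemma, and the factoriality (rather than mere finiteness) of the trace-norm completion of an increasing chain. The latter is the delicate point and I would handle it by a martingale argument: the trace-preserving conditional expectation onto each $M_n$ carries a central element of the completed union to a central, hence scalar, element of $M_n$, and these expectations converge to the original element in $\|\cdot\|_2$, forcing it to be scalar. With these two facts in place, the construction stays within the separable II$_1$ factors throughout — so that, pleasantly, no appeal to CH is needed here — and the argument closes.
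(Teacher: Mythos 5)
Your argument is correct and is essentially the standard one: the paper offers no proof of this fact, deferring to \cite[Fact 2.8]{usvy}, and that reference's proof is exactly the chain-plus-amalgamation construction you describe (with the same devices — amalgamated free products pushed into factors via a free product with a free group factor, and the conditional-expectation argument for factoriality of the chain union). Nothing essential is missing, and you are right that CH plays no role here.
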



\begin{fact}
E.c. factors are locally universal and McDuff.
\end{fact}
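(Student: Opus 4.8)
The plan is to prove the two assertions separately, in each case by choosing a suitable extension $Q$ of the e.c. factor $M$ and then feeding it into the definition of existential closure. Both extensions will be von Neumann tensor products, so the only structural input needed is that a tensor product of II$_1$ factors is again a II$_1$ factor.

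For local universality, let $N$ be an arbitrary separable II$_1$ factor; the goal is to embed $N$ into $M^\u$. First I would form the extension $Q := M \overline{\otimes} N$, a separable II$_1$ factor in which $M = M \otimes 1$ is a subfactor and $N = 1 \otimes N$ is a commuting subfactor. Since $M$ is e.c., it is existentially closed in $Q$, so there is an embedding $\theta : Q \hookrightarrow M^\u$ restricting to the diagonal embedding of $M$. Restricting $\theta$ to the copy $1 \otimes N$ of $N$ then yields an embedding $N \hookrightarrow M^\u$. As $N$ was arbitrary, $M^\u$ contains every separable II$_1$ factor, i.e.\ $M$ is locally universal.

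For the McDuff property I would run the same argument with $N$ replaced by $\R$, taking $Q := M \overline{\otimes} \R$ and obtaining an embedding $\theta : Q \hookrightarrow M^\u$ that is the diagonal embedding on $M \otimes 1$. Since $1 \otimes \R$ commutes with $M \otimes 1$ in $Q$, its image $\theta(1 \otimes \R)$ commutes with the diagonal copy of $M$ inside $M^\u$, hence lies in the relative commutant $M' \cap M^\u$. Thus the central sequence algebra $M' \cap M^\u$ contains a copy of $\R$ and is in particular nonabelian, so McDuff's characterization of McDuff factors via noncommutativity of the central sequence algebra shows that $M$ is McDuff.

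Each step is short, so there is no hard estimate to grind through; the work is entirely in lining up the right standard inputs. The two facts to be sure of are that the tensor products $M \overline{\otimes} N$ and $M \overline{\otimes} \R$ are genuinely II$_1$ factors --- so that the phrase ``existentially closed in $Q$'' legitimately applies --- and the precise form of McDuff's theorem guaranteeing that a nonabelian central sequence algebra forces the McDuff property. I expect the latter to be the main point to cite carefully, since everything else is immediate from the definition of existential closure.
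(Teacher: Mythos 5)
Your proof is correct and is essentially the standard argument: the paper itself only cites \cite[Section 2]{nomodcomp} for this fact, but your tensor-product trick (embed $M\overline{\otimes}N$, resp.\ $M\overline{\otimes}\R$, into $M^\u$ over the diagonal copy of $M$ and observe that the commuting tensor factor lands in $M'\cap M^\u$) is exactly the argument the paper runs in its Digression for the relative commutant $N'\cap M$. The only point to cite with care is, as you say, McDuff's theorem that a separable II$_1$ factor with noncommutative central sequence algebra is McDuff; everything else is immediate from the definition of existential closure.
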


The following appears in \cite{Gold2}:
\begin{fact}\label{bicomm}
Suppose that $N$ is a w-spectral gap subfactor of the e.c. factor $M$.  Then $(N'\cap M)'\cap M=N$.
\end{fact}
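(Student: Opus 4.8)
The plan is to prove the nontrivial inclusion $(N'\cap M)'\cap M\subseteq N$ by contradiction, using the defining property of e.c.\ factors to transport a ``commutation-detecting'' element into $M^\u$, where w-spectral gap produces a contradiction. The reverse inclusion $N\subseteq (N'\cap M)'\cap M$ is immediate, since every element of $N$ commutes with $N'\cap M$ by definition.

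So fix $x\in (N'\cap M)'\cap M$ and suppose, toward a contradiction, that $x\notin N$. The heart of the argument is to manufacture an extension of $M$ in which $x$ fails to commute with the relative commutant of $N$. Concretely, I would form the amalgamated free product
$$Q := M *_N \bigl(N\,\bar\otimes\, L(\mathbb Z)\bigr),$$
where $N$ sits inside $N\,\bar\otimes\,L(\mathbb Z)$ as $N\otimes 1$ and the amalgamation is taken over the trace-preserving conditional expectations onto $N$. This $Q$ is a finite von Neumann algebra, and after composing with a trace-preserving embedding into a II$_1$ factor we may assume $Q$ is itself a II$_1$ factor containing $M$. Let $a:=1\otimes u$, where $u$ is a Haar unitary generating $L(\mathbb Z)$. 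Then $a$ commutes with $N=N\otimes 1$, so $a\in N'\cap Q$. On the other hand, writing $x^\circ:=x-E_N(x)$, which is nonzero precisely because $x\notin N$, and noting that $a$ commutes with $E_N(x)\in N$, we have $[a,x]=ax^\circ-x^\circ a$; freeness with amalgamation shows that $ax^\circ$ and $x^\circ a$ are nonzero reduced words lying in orthogonal subspaces of $L^2(Q)$, so that $[a,x]\neq 0$.

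Now I invoke e.c.-ness: since $M$ is existentially closed and $Q$ is an extension of $M$, there is an embedding $\theta:Q\hookrightarrow M^\u$ restricting to the diagonal embedding on $M$. Because $a\in N'\cap Q$ and $\theta$ fixes $N$ pointwise, $\theta(a)\in N'\cap M^\u$; w-spectral gap then gives $\theta(a)\in (N'\cap M)^\u$. But $x\in (N'\cap M)'\cap M$ commutes, as a constant sequence, with every element of $(N'\cap M)^\u$, so $[x,\theta(a)]=0$. On the other hand $\theta(x)=x$ and $\theta$ is injective, whence $[\theta(a),x]=\theta([a,x])\neq 0$, a contradiction. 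Therefore $x\in N$, completing the proof.

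I expect the main obstacle to be the construction of $Q$ together with the verification that $a$ detects $x$: one must choose the extension so that (i) it is, or embeds into, a genuine II$_1$ factor, as the definition of e.c.\ requires; (ii) the adjoined element $a$ genuinely commutes with $N$; and (iii) $a$ fails to commute with an arbitrary $x\notin N$. The amalgamated free product is well suited to all three, the key computation being that $ax^\circ$ and $x^\circ a$ are orthogonal, hence distinct, reduced words. Everything after this construction is a formal application of the e.c.\ and w-spectral gap hypotheses. One could alternatively take $Q$ to be generated by $M$ and the Jones projection $e_N$, for which $\{e_N\}'\cap M=N$ records exactly the needed detection; the free product has the advantage of manifestly remaining in the finite world even when $[M:N]=\infty$.
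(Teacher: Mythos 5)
Your proposal is correct and follows essentially the same route as the paper's source for this fact: the paper (see the discussion preceding Question \ref{amal}) indicates that the proof proceeds exactly by noting that $M$ is e.c.\ in the amalgamated free product $M*_N(N\otimes L(\mathbb Z))$, adjoining a Haar unitary commuting with $N$, and then combining the resulting embedding into $M^\u$ with w-spectral gap to force $(N'\cap M)'\cap M\subseteq N$. Your verification that $ax^\circ$ and $x^\circ a$ are orthogonal nonzero reduced words, and your care in passing to a II$_1$ factor containing the amalgamated free product, fill in the details correctly.
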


\begin{thm}
Suppose that $N$ is a w-spectral gap subfactor of the e.c. factor $M$.  Then $N'\cap M$ is a factor.
\end{thm}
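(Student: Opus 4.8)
The plan is to show that the center of $N'\cap M$ is trivial, which is equivalent to $N'\cap M$ being a factor. Writing $P:=N'\cap M$, the key starting observation is that since $P$ sits inside $M$, its center can be computed as a relative commutant inside $M$. Indeed, an element $x\in P$ is central in $P$ if and only if it commutes with all of $P$; as $x\in P\subseteq M$, this says exactly that $x\in P'\cap M$. Hence $Z(P)=P\cap(P'\cap M)$, and the whole problem is reduced to understanding the relative commutant $P'\cap M=(N'\cap M)'\cap M$.

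The heart of the matter is thus to identify $(N'\cap M)'\cap M$, and this is precisely the content of Fact \ref{bicomm}. Its hypotheses—that $N$ be a w-spectral gap subfactor of the e.c. factor $M$—are exactly those we are assuming, so I would simply invoke it to conclude $(N'\cap M)'\cap M=N$. Substituting into the previous paragraph gives $Z(P)=(N'\cap M)\cap N$.

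It then remains to simplify this intersection. Using $N\subseteq M$ one has
\[
(N'\cap M)\cap N = N'\cap N\cap M = N'\cap N = Z(N).
\]
Since $N$ is by hypothesis a subfactor, it is in particular a factor, so $Z(N)=\mathbb C\cdot 1$. Therefore $Z(P)=\mathbb C\cdot 1$; that is, $N'\cap M$ is a factor, as desired.

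I expect that the ``main obstacle'' in this argument has, in effect, been discharged in advance: all of the genuine difficulty—the interplay between the w-spectral gap condition ($N'\cap M^\u=(N'\cap M)^\u$) and the existential closedness of $M$—is packaged into the bicommutant identity of Fact \ref{bicomm}. Granting that fact, what is left is a short and essentially formal center computation. The only point requiring care is the bookkeeping of commutants: one must consistently interpret the relevant commutants relative to $M$ (so that the double-commutant statement of Fact \ref{bicomm} applies verbatim) and reserve the use of $N$ being a factor for the very last step.
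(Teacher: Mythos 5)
Your proposal is correct and follows essentially the same route as the paper: invoke Fact \ref{bicomm} to identify $(N'\cap M)'\cap M$ with $N$, and then observe that a central element of $P=N'\cap M$ therefore lies in $N'\cap N=Z(N)=\mathbb C$. The paper phrases this element-by-element while you phrase it as the identity $Z(P)=P\cap(P'\cap M)$, but the content is identical.
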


\begin{proof}
Set $P:=N'\cap M$.  We show that $P$ is a factor.  Take $x\in P$ such that $[x,y]=0$ for all $y\in P$.  Then $x\in (N'\cap M)'\cap M=N$, so $x\in N$.  Now suppose that $z\in N$.  Then since $x\in P$, we have $[x,z]=0$.  So $x\in Z(N)=\mathbb C$, as desired.
\end{proof}

\begin{cor}\label{almost}
Suppose that $N$ is a w-spectral gap subfactor of the e.c. factor $M$.  Then $N'\cap M^\u$ is a factor.
\end{cor}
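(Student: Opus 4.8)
The plan is to reduce the statement to the preceding Theorem by using the w-spectral gap hypothesis, and then to invoke the classical fact that the tracial ultrapower of a finite factor is again a factor. Concretely, set $P := N' \cap M$. The definition of w-spectral gap gives, for free, the identity $N' \cap M^\u = (N' \cap M)^\u = P^\u$, where the middle term is the ultrapower of $P$ (with its inherited trace) sitting inside $M^\u$ in the obvious way. So the entire corollary collapses to the assertion that $P^\u$ is a factor.

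By the preceding Theorem, $P = N' \cap M$ is a factor. Since $P$ is a von Neumann subalgebra of the II$_1$ factor $M$ carrying the restricted trace, it is a finite factor, hence either a matrix algebra $M_k(\mathbb C)$ or a II$_1$ factor. In the finite-dimensional case $P^\u \cong P$ is trivially a factor, so the only content lies in the II$_1$ case: I must show that $P^\u$ has trivial center.

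The key step, and the only real obstacle, is therefore the standard-but-nontrivial fact that the tracial ultrapower of a II$_1$ factor is again a II$_1$ factor, which I would establish via Dixmier averaging. Take $x = (x_n)_\u \in Z(P^\u)$; after subtracting its trace one may assume each $x_n$ satisfies $\|x_n\| \le 1$ and $\tau(x_n) = 0$, and the goal is $\lim_\u \|x_n\|_2 = 0$. For each $n$, Dixmier's averaging theorem in the factor $P$ produces a convex combination of unitary conjugates of $x_n$ whose $2$-norm is within $1/n$ of $\tau(x_n) = 0$; comparing this average with $x_n$ itself and using $u x_n u^* - x_n = [u, x_n] u^*$, one extracts a single unitary $y_n \in \mathcal U(P)$ with $\|[y_n, x_n]\|_2 \ge \|x_n\|_2 - 1/n$. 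Then $y = (y_n)_\u$ is a unitary in $P^\u$ with $\|[y, x]\|_2 = \lim_\u \|[y_n, x_n]\|_2 \ge \lim_\u \|x_n\|_2$. But $x$ is central, so $[y,x] = 0$, forcing $\lim_\u \|x_n\|_2 = 0$, i.e. $x \in \mathbb C$.

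Since this last fact is entirely classical, in the actual write-up I would expect to cite it rather than reproduce the averaging argument, leaving the corollary as the one-line deduction that $N' \cap M^\u = P^\u$ is a factor. The subtlety worth flagging is that factoriality of $P^\u$ concerns the center of the ultrapower, not the relative commutant $P' \cap P^\u$ of the diagonal copy of $P$; the latter is where property Gamma lives and may well be nontrivial (e.g.\ for $P = \R$), so no fullness hypothesis on $P$ is needed.
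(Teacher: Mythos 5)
Your proof is correct and follows exactly the paper's route: w-spectral gap gives $N'\cap M^\u=(N'\cap M)^\u$, the preceding theorem makes $N'\cap M$ a factor, and the classical fact that the tracial ultrapower of a finite factor is a factor finishes it. The paper's proof is precisely this one-liner (citing the ultrapower fact without proof), so your Dixmier averaging digression is just an expansion of a step the paper takes for granted.
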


\begin{proof}
$N'\cap M^\u=(N'\cap M)^\u$ and the ultrapower of a factor is once again a factor.
\end{proof}

Although we won't need the following result, it might be of independent interest:

\begin{dig}
If $N$ is a w-spectral gap of the e.c. factor $M$, then $N'\cap M$ is a locally universal McDuff II$_1$ factor.
\end{dig}

\begin{proof}
Once again, set $P:=N'\cap M$.  We first show that $P$ is locally universal.  Let $Q$ be any II$_1$ factor.  Since $M$ is e.c. in $M\otimes Q$, we have an embedding $M\otimes Q\hookrightarrow M^\u$ that restircts to the diagonal embedding $M\hookrightarrow M^\u$.  In particular, $Q\hookrightarrow M'\cap M^\u\subseteq N'\cap M^\u=(N'\cap M)^\u=P^\u$, whence $P$ is locally universal.


Since $P$ is locally universal, it follows that $P$ is a II$_1$ factor.

Finally, we show that $P$ is McDuff.  It suffices to show that $M_2(\mathbb C)$ embeds in $P'\cap P^\u$.  Take an embedding $M\otimes M_2(\mathbb C)\hookrightarrow M^\u$ restricting to the diagonal embedding of $M\hookrightarrow M^\u$.  As in the previous argument, this embedding sends $M_2(\mathbb C)$ into $P^\u$.  Moreover, since $M'\cap M^\u\subseteq P'\cap M^\u$, this embedding sends $M_2(\mathbb C)$ into $P'\cap P^\u$, as desired.
%
\end{proof}

Returning to the main thread, at this moment, we simply have that every property (T) factor $N$ embeds in a II$_1$ factor $M$ such that the diagonal embedding $N\hookrightarrow M^\u$ has factorial relative commutant.  We would like a single $M$ that works for all property (T) factors.  This leads us to the following:

\begin{defn}
II$_1$ factors $M_1$ and $M_2$ are said to be \textbf{elementarily equivalent}, denoted $M_1\equiv M_2$, if $M_1^\u\cong M_2^\u$.\footnote{This is an evil, logic-free, definition, and makes heavy use of our standing CH assumption.}
\end{defn}

The following observation is obvious but crucial:

\begin{lemma}\label{obvious}
If $M_1\equiv M_2$ and $N$ is a II$_1$ factor, then $N$ admits an embedding into $M_1^\u$ with factorial relative commutant if and only if $N$ admits an embedding into $M_2^\u$ with factorial relative commutant.
\end{lemma}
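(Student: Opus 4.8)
The plan is to exploit the fact that the definition of $M_1 \equiv M_2$ furnishes an honest isomorphism $\theta \colon M_1^\u \to M_2^\u$, and then to observe that any $*$-isomorphism between II$_1$ factors carries relative commutants to relative commutants and preserves factoriality. Since the asserted equivalence is symmetric in $M_1$ and $M_2$, it suffices to prove a single implication.

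First I would fix an isomorphism $\theta \colon M_1^\u \to M_2^\u$, which exists by the definition of elementary equivalence (this is the only place where the standing CH assumption is invoked). Suppose $\pi \colon N \hookrightarrow M_1^\u$ is an embedding with $\pi(N)' \cap M_1^\u$ a factor. I would then consider the composite $\theta \circ \pi \colon N \hookrightarrow M_2^\u$, which is again an embedding since both $\pi$ and $\theta$ are injective $*$-homomorphisms.

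The key step is to verify the identity
\[
\theta\bigl(\pi(N)' \cap M_1^\u\bigr) = (\theta\pi)(N)' \cap M_2^\u .
\]
This is immediate from $\theta$ being a surjective $*$-isomorphism: an element $x \in M_1^\u$ commutes with every element of $\pi(N)$ if and only if $\theta(x) \in M_2^\u$ commutes with every element of $\theta(\pi(N)) = (\theta\pi)(N)$, and $\theta$ maps $M_1^\u$ onto all of $M_2^\u$. Hence $\theta$ restricts to an isomorphism between the two relative commutants. Since having trivial center is preserved under isomorphism, $(\theta\pi)(N)' \cap M_2^\u$ is a factor, establishing the forward direction.

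For the converse I would run the same argument verbatim with the isomorphism $\theta^{-1} \colon M_2^\u \to M_1^\u$, using that $\equiv$ is symmetric. There is no serious obstacle here; the only point requiring a moment's care is confirming that the restriction of $\theta$ to the relative commutant lands \emph{onto} the relative commutant computed inside $M_2^\u$ (rather than merely into it), which is precisely where surjectivity of $\theta$ is used.
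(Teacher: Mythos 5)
Your argument is correct and is exactly the intended one: the paper states this lemma without proof, calling it ``obvious,'' and the obvious proof is precisely what you wrote --- compose with the isomorphism $M_1^\u\cong M_2^\u$ furnished by the (CH-dependent) definition of $\equiv$, and note that a surjective $*$-isomorphism carries relative commutants onto relative commutants and preserves factoriality. Nothing further is needed.
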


Consequently, \textbf{if} all e.c. factors were elementarily equivalent, then our main theorem would follow from Corollary \ref{almost} by taking any e.c. fcactor.

However, while still an open problem, it is highly unlikely (at least in this author's opinion) that all e.c. factors are elementarily equivalent.  Instead, we look to an important subclass of these factors for which all members are elementarily equivalent.  First, we need:

\begin{defn}
Suppose that $M_1$ is a subfactor of the II$_1$ factor $M_2$.  We say that $M_1$ is an \textbf{elementary subfactor} of $M_2$, denoted $M_1\preceq M_2$, if there is an isomorphism $M_1^\u\cong M_2^\u$ that fixes the diagonal images of $M_1$.\footnote{Again, an evil logic-free definition taking full advantage of our standing CH assumption.}
\end{defn}

\begin{defn}
If $\mathcal C$ is a class of II$_1$ factors, we say that $\mathcal C$ is \textbf{model-complete} if, whenever $M_1$ and $M_2$ are elements of $\mathcal C$ with $M_1\subseteq M_2$, then $M_1\preceq M_2$. 
\end{defn}

The following facts follow easily from the definitions:

\begin{lemma}\label{extensivemc}
Suppose that $\mathcal C$ is an extensive, model-complete class of II$_1$ factors.  Then:
\begin{enumerate}
\item Every element of $\mathcal C$ is an e.c. factor.
\item If $M_1$ and $M_2$ belong to $\mathcal C$, then $M_1\equiv M_2$.
\end{enumerate}
\end{lemma}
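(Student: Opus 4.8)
The plan is to derive both statements directly from the definitions, using the crucial feature of the elementary-subfactor relation $\preceq$ that the witnessing isomorphism of ultrapowers \emph{fixes the diagonal copy} of the smaller factor, rather than merely providing an abstract isomorphism. Throughout I identify each factor with its image under any embedding in play, and I use that a class of II$_1$ factors is closed under isomorphism, so that $\mathcal C$-membership is preserved when passing to isomorphic copies inside larger factors.

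For part (1), fix $M\in\mathcal C$ and an arbitrary extension $Q\supseteq M$; I must produce an embedding $Q\hookrightarrow M^\u$ restricting to the diagonal embedding of $M$. Since $\mathcal C$ is extensive, $Q$ embeds into some $M'\in\mathcal C$, and after identifying $Q$ with its image I may assume $M\subseteq Q\subseteq M'$ with both $M,M'\in\mathcal C$. Model-completeness then gives $M\preceq M'$, i.e.\ an isomorphism $\Phi\colon M^\u\to (M')^\u$ that fixes the diagonal image of $M$. Composing the diagonal embedding $Q\hookrightarrow M'\hookrightarrow (M')^\u$ with $\Phi^{-1}$ yields an embedding $Q\hookrightarrow M^\u$; on $M$ this composite sends $x$ to $\Phi^{-1}$ of the diagonal image of $x$ in $(M')^\u$, which by the diagonal-fixing property of $\Phi$ is exactly the diagonal image of $x$ in $M^\u$. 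Hence $M$ is existentially closed in $Q$, and as $Q$ was arbitrary, $M$ is e.c.

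For part (2), given $M_1,M_2\in\mathcal C$ I look for a single factor in $\mathcal C$ containing both. The tensor product $M_1\otimes M_2$ is a II$_1$ factor containing isomorphic copies of each, so by extensiveness it embeds into some $M_3\in\mathcal C$; thus both $M_1$ and $M_2$ embed into $M_3$. Identifying them with their images, model-completeness gives $M_1\preceq M_3$ and $M_2\preceq M_3$, and in particular isomorphisms $M_1^\u\cong M_3^\u$ and $M_2^\u\cong M_3^\u$. Chaining these yields $M_1^\u\cong M_2^\u$, i.e.\ $M_1\equiv M_2$.

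The routine content is genuinely light, so the only real care needed---and the step I expect to be the main (if minor) obstacle---is the bookkeeping in part (1): one must verify that the composite embedding restricts to the diagonal, which is precisely where the diagonal-fixing clause in the definition of $\preceq$ is used, rather than merely the existence of an abstract isomorphism of ultrapowers. The corresponding step in part (2) is easier, since there only the existence of the ultrapower isomorphisms is needed and not their behavior on the diagonal.
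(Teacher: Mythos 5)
Your proof is correct, and since the paper simply asserts that this lemma ``follows easily from the definitions'' without giving an argument, your write-up is exactly the standard verification being alluded to: extensiveness supplies a common superfactor in $\mathcal C$, and model-completeness supplies the diagonal-fixing ultrapower isomorphism needed in part (1) and the bare ultrapower isomorphisms chained in part (2). You also correctly identify the one point requiring care, namely that the composite embedding in part (1) restricts to the diagonal precisely because $\preceq$ fixes the diagonal copy of $M$.
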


The following is a combination of \cite[Proposition 5.7, Proposition 5.10, and Proposition 5.14]{ecfactor}:

\begin{fact}
There is an extensive, model-complete class of II$_1$ factors.  In fact, there is a maximum such class $\mathcal G$.
\end{fact}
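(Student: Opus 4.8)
The plan is to exhibit $\mathcal G$ as the class of \emph{infinitely generic} II$_1$ factors produced by Robinson's method of infinite forcing, transported to continuous logic. The one structural prerequisite is that the class of II$_1$ factors is closed under unions of increasing chains: if $N_0\subseteq N_1\subseteq\cdots$ are II$_1$ factors with trace-preserving inclusions, then the tracial completion $N$ of $\bigcup_n N_n$ is again a II$_1$ factor. (Factoriality survives because, for central $z\in N$, each conditional expectation $E_{N_n}(z)$ is central in $N_n$, hence scalar, while $E_{N_n}(z)\to z$ in $\|\cdot\|_2$.) This closure is exactly what lets two players build a factor by alternately extending such a chain, and I would run the whole argument through the associated Banach--Mazur game of \emph{enforceable properties}, in which a property is enforceable when one player can force the (separable) union of the chain to possess it. The combinatorial engine is the Conjunction Lemma for this game: a countable family of enforceable properties can be enforced simultaneously.

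First I would fix the forcing relation $N\Vdash\varphi$ between a separable factor $N$ and a condition $\varphi$ with parameters in $N$, declare $M$ to be \emph{infinitely generic} when $M\models\varphi\iff M\Vdash\varphi$ for all such $\varphi$, and let $\mathcal G$ be the resulting class. Extensiveness I would obtain by showing that ``the built factor is infinitely generic'' is enforceable: starting the game above any prescribed factor $N_0$, the relevant player can at stage $n$ decide the $n$-th condition, so the union is generic and contains $N_0$, yielding an embedding of $N_0$ into a member of $\mathcal G$. Model-completeness rests on the standard forcing lemma that $\Vdash$ is monotone along inclusions; granting this, if $M_1\subseteq M_2$ with both in $\mathcal G$ and $\varphi$ is a condition over $M_1$, then $M_1\models\varphi\Rightarrow M_1\Vdash\varphi\Rightarrow M_2\Vdash\varphi\Rightarrow M_2\models\varphi$, and the same chain applied to the weak forcing of $\neg\varphi$ gives the converse, so all conditions over $M_1$ agree in $M_1$ and $M_2$. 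Invoking CH to pass to the ultrapower formulation, this is precisely $M_1\preceq M_2$.

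The heart of the matter is maximality. Given any extensive, model-complete class $\mathcal C$ and any $M\in\mathcal C$, I would interleave the two extensiveness properties to build a chain $M=M_0\subseteq N_0\subseteq M_1\subseteq N_1\subseteq\cdots$ with $M_i\in\mathcal C$ and $N_i\in\mathcal G$, sharing a common union $W=\bigcup_i M_i=\bigcup_i N_i$. Model-completeness of $\mathcal C$ makes $(M_i)$ an elementary chain, so $M\preceq W$; model-completeness of $\mathcal G$ makes $(N_i)$ an elementary chain, so $N_0\preceq W$ with $N_0\in\mathcal G$. The delicate final step---where I expect the real obstacle to lie---is to promote this to $M\in\mathcal G$: one must verify that infinite genericity transfers along elementary inclusions (upward, to conclude $W\in\mathcal G$ from $N_0\in\mathcal G$, and downward, to conclude $M\in\mathcal G$ from $M\preceq W$). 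This amounts to checking that the forcing relation is absolute between a generic factor and its elementary sub- and super-factors, which follows formally from the forcing lemma but is the technically substantial point. It is here that the care of the three cited propositions of \cite{ecfactor} is essential, and it is complicated further by the fact that the class of II$_1$ factors is not elementary and that the ``finite'' approximations arising in the game need not themselves be factors.
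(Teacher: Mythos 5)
The paper does not prove this Fact but imports it from \cite[Proposition 5.7, Proposition 5.10, and Proposition 5.14]{ecfactor}, and your sketch is exactly the Robinson infinite-forcing construction that those propositions transport to continuous logic (forcing relation, genericity, monotonicity of $\Vdash$ for model-completeness, interleaved chains for maximality), so your approach coincides with the source's. One correction to the step you rightly flag as delicate: infinite genericity does not pass up an arbitrary elementary extension, so $W\in\mathcal G$ should be obtained from the lemma that the union of a chain of infinitely generic structures is again infinitely generic (available to you since the interleaved chain gives $W=\bigcup_i N_i$ with each $N_i\in\mathcal G$), after which only the downward transfer along $M\preceq W$ is needed, and that is precisely \cite[Proposition 5.17]{ecfactor}, quoted later in this paper.
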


\begin{defn}
Elements of $\mathcal G$ are called \textbf{infinitely generic} II$_1$ factors.
\end{defn}

We now have the main result:

\begin{thm}
Suppose that $M$ is an infinitely generic II$_1$ factor.  Then if $N$ is a property (T) factor, then $N$ admits an embedding into $M^\u$ with factorial relative commutant.
\end{thm}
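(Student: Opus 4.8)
The plan is to exploit two features of the class $\mathcal G$ of infinitely generic factors that have been assembled above: that $\mathcal G$ is extensive, and that all of its members are mutually elementarily equivalent. The given $M$ is a fixed element of $\mathcal G$, but there is no reason to expect the arbitrary property (T) factor $N$ to sit inside this particular $M$ in any useful way. So instead of working with $M$ directly, I would first locate a more convenient member of $\mathcal G$ into which $N$ embeds with the desired factoriality, and only afterward transport the conclusion back to $M$ via elementary equivalence.

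First I would invoke extensivity: since $\mathcal G$ is extensive, there is some $M_0\in\mathcal G$ with $N\hookrightarrow M_0$. By Lemma \ref{extensivemc}(1), $M_0$ is then an e.c.\ factor. Since $N$ is a property (T) factor, it has w-spectral gap in every extension, and in particular in $M_0$. Thus the hypotheses of Corollary \ref{almost} are satisfied with $M_0$ in the role of $M$, and we conclude that $N'\cap M_0^\u$ is a factor; equivalently, the diagonal embedding $N\hookrightarrow M_0^\u$ has factorial relative commutant.

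Next I would transport this to the given $M$. Both $M$ and $M_0$ lie in $\mathcal G$, so Lemma \ref{extensivemc}(2) gives $M\equiv M_0$. Lemma \ref{obvious} then says that $N$ admits an embedding into $M^\u$ with factorial relative commutant if and only if it admits one into $M_0^\u$ with factorial relative commutant. The previous paragraph supplies the latter, and we are done.

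Since each step is a direct application of an already-established result, I do not expect a genuine technical obstacle; the only point requiring care is the logical organization. The key realization is that one cannot work with the fixed $M$ from the outset — one must pass through a well-chosen $M_0\in\mathcal G$ produced by extensivity, and then use the elementary equivalence of all infinitely generic factors to return to $M$. The substantive content of the theorem really resides in the supporting results, namely Corollary \ref{almost} (factoriality of the relative commutant for a w-spectral gap subfactor of an e.c.\ factor) and Lemma \ref{extensivemc} (e.c.-ness and mutual elementary equivalence within $\mathcal G$); the theorem itself is essentially the assembly of these two pieces.
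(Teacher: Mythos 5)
Your argument is exactly the paper's: pass to an infinitely generic $M_0$ containing $N$ via extensivity, apply Lemma \ref{extensivemc}(1) and Corollary \ref{almost} (using that property (T) gives w-spectral gap in any extension) to get factoriality of $N'\cap M_0^\u$, then transfer back to $M$ via Lemma \ref{extensivemc}(2) and Lemma \ref{obvious}. The proof is correct and matches the paper step for step.
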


\begin{proof}
Take an infinitely generic II$_1$ factor $M_1$ with $N\subseteq M_1$.  By Lemma \ref{extensivemc}(1), $M_1$ is e.c. whence $N'\cap M^\u$ is a factor by Corollary \ref{almost}.  By Lemma \ref{extensivemc}(2), $M\equiv M_1$, whence we are done by Lemma \ref{obvious}.
\end{proof}

\begin{rmk}
All that we used about property (T) factors is that they automatically have w-spectral gap in any extension.  Are there other II$_1$ factors with this property?
\end{rmk}

\section{The case of embeddable factors}

We now consider what happens when we restrict to embeddable factors.  All notions from the last section relativize to this setting.  For example, by an e.c. embeddable factor we mean an embeddable factor that is e.c. in all embeddable extensions.  Similarly, one can define the class of infinitely generic embeddable factors, which forms a subclass of the class of e.c. embeddable factors.  The class of e.c. embeddable factors and the subclass of infinitely generic embeddable factors are both extensive in the class of embeddable factors.  See \cite{ecfactor} for more details on this.

\begin{conj}\label{embedconj}
Suppose that $N$ is a w-spectral gap subfactor of the e.c. embeddable factor $M$.  Then $(N'\cap M)'\cap M=N$. 
\end{conj}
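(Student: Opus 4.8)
The plan is to relativize the proof of Fact~\ref{bicomm} to the embeddable setting, isolating the single place where being e.c. in \emph{all} extensions was used. As before, set $P:=N'\cap M$. One inclusion is automatic: since $N$ commutes with $P=N'\cap M$, we always have $N\subseteq P'\cap M=(N'\cap M)'\cap M$. The content is the reverse inclusion, so fix $x\in (N'\cap M)'\cap M$; thus $x\in M$ and $x$ commutes with every element of $P$. I want to produce enough ``free'' elements commuting with $N$ inside $M^\u$ to force $x\in N$.

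To this end, form the amalgamated free product $Q:=M *_N (N\otimes L(\mathbb Z))$, and let $b:=1\otimes v$ where $v\in L(\mathbb Z)$ is a Haar unitary. By construction $b$ is a unitary commuting with $N$, with $E_N(b)=0$, and $b$ is free from $M$ with amalgamation over $N$. \emph{Suppose for the moment that $Q$ is embeddable.} Then, as $Q$ is an embeddable extension of $M$ and $M$ is e.c. in every embeddable extension, there is an embedding $Q\hookrightarrow M^\u$ restricting to the diagonal embedding of $M$; identify $Q$ with its image. Because $b$ commutes with $N$ we have $b\in N'\cap M^\u$, and w-spectral gap gives $N'\cap M^\u=(N'\cap M)^\u=P^\u$, so $b\in P^\u$. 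Since $x$ commutes with every element of $P$, it commutes with every element of $P^\u$ (apply the commutation entrywise to a representing sequence), whence $[x,b]=0$.

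It remains to extract $x\in N$ from this commutation, which is a standard freeness computation. Writing $x=E_N(x)+x^\circ$ with $x^\circ\in M\ominus N$ and using that $b$ is a unitary commuting with $N$, the relation $bxb^*=x$ collapses to $bx^\circ=x^\circ b$. But $b\in (N\otimes L(\mathbb Z))\ominus N$ and $x^\circ\in M\ominus N$, so $bx^\circ$ and $x^\circ b$ are reduced words of length two lying in the orthogonal subspaces of $L^2(Q)$ indexed by the two alternation patterns; their equality forces both to vanish. Since $\|bx^\circ\|_2=\|x^\circ\|_2$, we conclude $x^\circ=0$, i.e. $x=E_N(x)\in N$, as desired.

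The main obstacle is exactly the parenthetical assumption that $Q=M*_N(N\otimes L(\mathbb Z))$ is embeddable. In the setting of Fact~\ref{bicomm} this step is free, because $M$ is e.c. in all extensions; here we may only feed e.c.-ness an \emph{embeddable} extension. The factors $M$ and $N\otimes L(\mathbb Z)$ are both embeddable, but they are amalgamated over $N$, which for a property (T) factor is far from amenable, and it is not known whether embeddability is preserved by amalgamated free products over such subfactors. This is precisely the hurdle alluded to above, and I expect any proof of the conjecture to hinge on resolving (a suitable special case of) this amalgamated embeddability problem.
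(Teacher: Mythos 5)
This statement is a \emph{conjecture} in the paper; the paper offers no proof of it, only an explanation of why it is not already a theorem. Your writeup does not prove it either: the entire argument rests on the sentence ``Suppose for the moment that $Q$ is embeddable,'' and that supposition is an open problem, not a lemma. Without it, you cannot feed $Q=M*_N(N\otimes L(\mathbb Z))$ to the e.c.-ness of $M$ (which here only applies to \emph{embeddable} extensions), you get no copy of the Haar unitary $b$ inside $M^\u$ commuting with $N$, and the freeness computation never gets off the ground. So, concretely, the gap is the unproven embeddability of the amalgamated free product over the w-spectral gap (e.g.\ property (T)) base $N$ --- exactly the paper's Question~\ref{amal}.

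That said, your conditional reduction is correct and coincides with the paper's own analysis: the paper states that the proof of Fact~\ref{bicomm} relativizes to the embeddable setting except at this one step, and hence that a positive answer to Question~\ref{amal} yields Conjecture~\ref{embedconj}. Your freeness argument (orthogonality of the length-two alternating words $bx^\circ$ and $x^\circ b$ in $L^2(Q)$, plus $\|bx^\circ\|_2=\|x^\circ\|_2$, forcing $x^\circ=0$) is a sound account of how the bicommutant identity is extracted once $b$ is available, and you have correctly isolated the unique point at which ``e.c.\ in all extensions'' was used. So treat your text as a proof of the implication ``Question~\ref{amal} (for the relevant $Q$) implies Conjecture~\ref{embedconj},'' not as a proof of the conjecture itself.
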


Why is it not the case that Conjecture \ref{embedconj} is simply a theorem?  Well, the proof of Fact \ref{bicomm} uses the fact that if $N$ is a w-spectral gap subfactor of the e.c. factor $M$, then $M$ is e.c. in the amalgamated free product $M*_N (N\otimes L(\mathbb Z))$.  If $M$ is an e.c. embeddable factor, then we could only conclude that $M$ is e.c. in $M*_N (N\otimes L(\mathbb Z))$ \textit{if} we knew that $M*_N (N\otimes L(\mathbb Z))$ is also embeddable.  However, it is unknown at the moment whether or not this is the case.  

\begin{question}\label{amal}
Does taking amalgamated free products of embeddable factors with property (T) base preserve embeddability?
\end{question}

Thus, we just argued that a positive answer to Question \ref{amal} yields a positive solution to Conjecture \ref{embedconj}.

Suppose we have a positive solution to Conjecture \ref{embedconj}.  Since $\R$ is an e.c. embeddable factor (see \cite[Lemma 2.1]{ecfactor}), once again, \textbf{if} all e.c. embeddable factors were elementarily equivalent, we would actually arrive at a positive solution to the FCEP for property (T) factors.  Once again, we believe this to be highly doubtful.  Passing to infinitely generic embeddable factors  and noting that the rest of the arguments of the previous section go through, we get:

\begin{thm}
Suppose that Conjecture \ref{embedconj} has a positive answer and that $M$ is an infinitely generic embeddable factor.  Then every embeddable property (T) factor admits a factorial embedding into $M^\u$.
\end{thm}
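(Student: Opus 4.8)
The plan is to mimic the argument for the absolute (non-embeddable) case given in Section 2, carefully checking that each ingredient has an embeddable analogue once Conjecture \ref{embedconj} is in hand. The key observation is that the entire strategy of the main theorem rests on just three pillars: a bicommutant identity for w-spectral gap subfactors of e.c. factors (Fact \ref{bicomm}), the fact that a member of an extensive, model-complete class is e.c. and that any two such members are elementarily equivalent (Lemma \ref{extensivemc}), and the transfer principle of Lemma \ref{obvious}. Each of these has a natural relativization to the embeddable setting, and Conjecture \ref{embedconj} is precisely the embeddable version of the first pillar.

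First I would fix an embeddable property (T) factor $N$ and produce an infinitely generic embeddable factor $M_1$ containing $N$. This is possible because the class of infinitely generic embeddable factors is extensive within the class of embeddable factors, as recalled at the start of Section 3; since $N$ is embeddable, it embeds into some member $M_1$ of this class. By the embeddable analogue of Lemma \ref{extensivemc}(1), $M_1$ is an e.c. embeddable factor. Next, since $N$ is property (T), it has w-spectral gap in $M_1$, so $N'\cap M_1^\u=(N'\cap M_1)^\u$. Assuming Conjecture \ref{embedconj}, we have $(N'\cap M_1)'\cap M_1=N$, and then the elementary argument of the first Theorem of Section 2 applies verbatim: setting $P:=N'\cap M_1$, any $x\in Z(P)$ lies in $(N'\cap M_1)'\cap M_1=N$, and commuting with all of $N$ forces $x\in Z(N)=\mathbb C$, so $P$ is a factor. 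By the embeddable analogue of Corollary \ref{almost}, $N'\cap M_1^\u=(N'\cap M_1)^\u=P^\u$ is a factor, so the diagonal embedding $N\hookrightarrow M_1^\u$ has factorial relative commutant.

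It remains to transfer this from $M_1$ to the given $M$. Since $M$ and $M_1$ are both infinitely generic embeddable factors, the embeddable analogue of Lemma \ref{extensivemc}(2) gives $M\equiv M_1$. By Lemma \ref{obvious}, $N$ admits an embedding into $M^\u$ with factorial relative commutant if and only if it does so into $M_1^\u$; since the latter holds, we are done.

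The only genuinely new content beyond Section 2 is the invocation of Conjecture \ref{embedconj} in place of Fact \ref{bicomm}, so the main obstacle is entirely conditional and already isolated in the statement: one must know the bicommutant identity $(N'\cap M)'\cap M=N$ for w-spectral gap subfactors of e.c. \emph{embeddable} factors, and as the discussion preceding Question \ref{amal} explains, the proof of Fact \ref{bicomm} breaks down here because it passes to the amalgamated free product $M*_N(N\otimes L(\mathbb Z))$, which is not known to remain embeddable. Once Conjecture \ref{embedconj} is granted, however, the remaining steps are purely formal: they are the verbatim relativizations of results already established in Section 2, and no further calculation is required.
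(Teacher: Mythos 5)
Your proposal is correct and follows exactly the route the paper intends: the paper gives no separate proof, simply noting that ``the rest of the arguments of the previous section go through'' once Conjecture \ref{embedconj} replaces Fact \ref{bicomm}, and your write-up is precisely that relativization (extensivity of the infinitely generic embeddable factors, the bicommutant identity from the Conjecture, w-spectral gap, and transfer via Lemma \ref{extensivemc}(2) and Lemma \ref{obvious}). No discrepancies to report.
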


In light of the previous theorem and recalling Popa's original question, we arrive at the obvious question:

\begin{question}\label{infgen}
Is $\R$ an infinitely generic embeddable factor?
\end{question}

\begin{cor}
If Conjecture \ref{embedconj} is true and Question \ref{infgen} has a positive answer, then the FCEP for property (T) factors has a positive solution.
\end{cor}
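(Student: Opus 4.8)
The plan is to obtain this corollary as a direct instantiation of the preceding theorem, the only genuinely new input being the identification of a concrete infinitely generic embeddable factor. Recall that the preceding theorem already asserts that, under Conjecture \ref{embedconj}, every embeddable property (T) factor admits a factorial embedding into $M^\u$ for \emph{any} infinitely generic embeddable factor $M$. Thus all the substantive work has been done, and what remains is to feed in a value of $M$ whose ultrapower is the object appearing in the FCEP, namely $\R^\u$.

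Concretely, I would argue as follows. Assume both hypotheses. By the positive answer to Question \ref{infgen}, the hyperfinite factor $\R$ is itself an infinitely generic embeddable factor. Since Conjecture \ref{embedconj} is assumed true, the hypotheses of the preceding theorem are met with $M := \R$, and so every embeddable property (T) factor $N$ admits an embedding $\pi : N \hookrightarrow \R^\u$ with $\pi(N)' \cap \R^\u$ a factor. This is precisely the statement of the FCEP restricted to the class of property (T) factors, so we are done.

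The main point to verify is simply that taking $M = \R$ is legitimate, i.e.\ that $\R$ meets the hypothesis of the preceding theorem; this is exactly what the positive answer to Question \ref{infgen} supplies, so there is no genuine obstacle at the level of this corollary and the difficulty has been entirely displaced into the two conditional hypotheses. It is worth stressing \emph{why} Question \ref{infgen} (rather than the mere existence of infinitely generic embeddable factors, already guaranteed by the relativized facts of Section 3) is the relevant input: the FCEP is phrased specifically in terms of embeddings into $\R^\u$, so it is essential that $\R$ itself, and not merely some abstract infinitely generic embeddable factor, play the role of $M$. Were Question \ref{infgen} to fail, the preceding theorem would still yield factorial embeddings into $M^\u$ for some infinitely generic embeddable $M$, but this would only be a ``poor man's'' resolution in the sense of the introduction rather than the genuine FCEP.
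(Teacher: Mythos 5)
Your proof is correct and is exactly the argument the paper intends (the corollary is stated without proof precisely because it is this immediate instantiation): a positive answer to Question \ref{infgen} lets you take $M=\R$ in the preceding theorem, whose conclusion then reads as the FCEP for embeddable property (T) factors. Your closing remark about why the identification of $\R$ itself as infinitely generic, rather than mere existence of such factors, is the essential input also matches the paper's framing.
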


In \cite[Proposition 5.21]{ecfactor}, it was claimed that $\R$ is an infinitely generic embeddable factor.  However, the proof there is horribly flawed and the question is still open at this time.  Let us point out:

\begin{lemma}\label{infgenequiv}
The following statements are equivalent:
\begin{enumerate}
\item $\R$ is infinitely generic embeddable factor.
\item There is an infinitely generic embeddable factor $M$ such that $\R\equiv M$.
\end{enumerate}
\end{lemma}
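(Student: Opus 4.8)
The plan is to prove the two implications separately, with essentially all of the content lying in the second. The forward implication $(1)\Rightarrow(2)$ is immediate: if $\R$ is itself infinitely generic, take $M=\R$ and note that $\R\equiv\R$ holds trivially.

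For $(2)\Rightarrow(1)$, recall first that $\R$ is an e.c. embeddable factor by \cite[Lemma 2.1]{ecfactor}, so the only thing genuinely in question is membership of $\R$ in the maximum extensive, model-complete class $\mathcal G$ of embeddable factors. The key point I would isolate and prove is that $\mathcal G$ is \emph{closed under elementary equivalence}: if $M\in\mathcal G$ and $N$ is an embeddable factor with $N\equiv M$, then $N\in\mathcal G$. Granting this, the argument finishes at once: the hypothesis of $(2)$ supplies an embeddable factor $\R$ with $\R\equiv M$ for some $M\in\mathcal G$, whence $\R\in\mathcal G$, which is exactly $(1)$.

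It then remains to explain why $\mathcal G$ is closed under $\equiv$. The clean route is model-theoretic: the class of infinitely generic structures is precisely the class of models of the infinite forcing companion, which is a (continuous) theory, and an axiomatizable class is automatically closed under elementary equivalence. A route that stays closer to the logic-free definitions of this paper is to exploit the \emph{maximality} of $\mathcal G$. Let $\mathcal G^+$ be the class of embeddable factors $N$ such that $N\equiv M$ for some $M\in\mathcal G$. Since $\mathcal G\subseteq\mathcal G^+$ and $\mathcal G$ is extensive, $\mathcal G^+$ is extensive; if one can further show that $\mathcal G^+$ is model-complete, then maximality of $\mathcal G$ forces $\mathcal G^+\subseteq\mathcal G$, hence $\mathcal G^+=\mathcal G$, which is precisely the desired closure under $\equiv$.

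The main obstacle is verifying model-completeness of $\mathcal G^+$, that is, that $N_1\subseteq N_2$ with $N_1,N_2\in\mathcal G^+$ implies $N_1\preceq N_2$. Unwinding this via the standard test, it amounts to showing that every member of $\mathcal G^+$ is existentially closed among the members of $\mathcal G^+$; and here one cannot simply appeal to the fact that members of $\mathcal G$ are e.c., since existential closedness is \emph{not} in general preserved under elementary equivalence. This is exactly the spot where the argument must use that genericity, unlike existential closedness, is an elementary property, so in practice the maximality route collapses back to the axiomatizability of $\mathcal G$ coming from the forcing-companion analysis underlying \cite{ecfactor}. Accordingly, I would present $(2)\Rightarrow(1)$ by directly invoking that $\mathcal G$ is an elementary class, and relegate the maximality reformulation to a remark.
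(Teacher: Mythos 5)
Your forward direction is fine, but the backward direction rests on a claim that is almost certainly false and is in any case unavailable: that the class $\mathcal G$ of infinitely generic embeddable factors is closed under elementary equivalence because it ``is precisely the class of models of the infinite forcing companion.'' In general, infinitely generic structures are all \emph{models} of the infinite forcing companion, but the converse fails; the two classes coincide exactly when the class of infinitely generic models is axiomatizable, and in that case its theory is a model companion of the base theory. Here that would contradict the non-existence of a model companion for (embeddable) tracial factors \cite{nomodcomp} --- indeed, the obstruction there comes from property (T) factors such as $L(\operatorname{SL}_3(\mathbb Z))$, which are embeddable. You correctly observe that existential closedness is not preserved under $\equiv$, but the assertion that ``genericity, unlike existential closedness, is an elementary property'' is precisely the unjustified (and presumably false) step; your maximality reformulation via $\mathcal G^+$ collapses back onto it, as you yourself note, so nothing in the proposal actually closes the gap. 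If closure of $\mathcal G$ under $\equiv$ were available, Lemma \ref{infgenequiv} would hold with $\R$ replaced by an arbitrary embeddable factor, which is not what the paper claims.

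The paper's proof instead exploits a property specific to $\R$. Fix any embedding $\R\hookrightarrow M$ with $M$ infinitely generic and $\R\equiv M$. Composing with the diagonal embedding and an isomorphism $M^\u\cong\R^\u$, one gets an embedding $\R\hookrightarrow\R^\u$, which by the easy direction of Jung's theorem \cite{jung} is unitarily conjugate to the diagonal embedding; hence the original embedding is elementary, i.e., $\R\preceq M$. One then invokes \cite[Proposition 5.17]{ecfactor}: $\mathcal G$ is closed under elementary \emph{substructure} (not elementary equivalence), so $\R\in\mathcal G$. The substructure-closure of $\mathcal G$ is a genuine theorem of infinite forcing, whereas closure under $\equiv$ is not; the role of Jung's theorem is to upgrade the hypothesis $\R\equiv M$ to the hypothesis $\R\preceq M$, and this upgrade is special to $\R$. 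Your proposal is missing both of these ingredients.
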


\begin{proof}
To prove the nontrivial direction, suppose that $M$ is an infinitely generic embeddable factor such that $\R\equiv M$.  Fixing an embedding $\R\hookrightarrow M$, we have that this embedding is automatically elementary.\footnote{This is well-known to those working in the model theory of operator algebras, but we sketch a quick proof for the sake of the reader.  Fix an isomorphism $M^\u\cong \R^\u$ and note that the induced embedding $\R\hookrightarrow \R^\u$ is conjungate to the diagonal embedding by the easy direction of the main result of \cite{jung}.  Consequently, there is an isomorphism $M^\u\cong \R^\u$ fixing the diagonal image of $\R$.}  We now quote \cite[Proposition 5.17]{ecfactor}, which implies that an elementary subfactor of an infinitely generic embeddable factor is an infinitely generic embeddable factor.
\end{proof}

There is another class of e.c. (embeddable) factors with the property that any two members are elementarily equivalent, namely the so-called \textbf{finitely generic (embeddable) factors} (see \cite[Section 6]{ecfactor} or \cite[Section 3]{Gold1} for a precise definition).  This class is also model-complete; in fact, by \cite[Corollary 3.12]{Gold1}, if a factor is e.c. in a finitely generic (embeddable) factor, then it is also a finitely generic (embeddable) factor.  Consequently, $\R$ is a finitely generic embeddable factor.\footnote{With some revisionist history, one can use this fact to give an alternate definition of finitely generic embeddable factors, namely an embeddable factor $M$ is a finitely generic embeddable factor if and only if:  (i)  $M\equiv \R$, and (ii) whenever $M\subseteq N$ and $N\equiv \R$, then $M$ is an elementary subfactor of $N$.}  Thus, at first glance, it might seem promising to look at this class instead.  Unfortunately, this class is far from extensive:

\begin{fact}(\cite{AGK})
$\R$ is the unique finitely generic embeddable factor.
\end{fact}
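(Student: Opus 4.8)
The plan is to show that any finitely generic embeddable factor $M$ is hyperfinite, hence isomorphic to $\R$. Two ingredients come for free. First, since the class of finitely generic embeddable factors is model-complete, any two of its members are elementarily equivalent, and $\R$ belongs to the class, so $M\equiv\R$. Second, every II$_1$ factor contains a copy of $\R$, so $\R\subseteq M$; as $\R$ and $M$ both lie in the model-complete class, this inclusion is automatically elementary, giving $\R\preceq M$. (Alternatively, $\R\preceq M$ follows from $M\equiv\R$ together with the easy direction of Jung's theorem, exactly as in the footnote to Lemma \ref{infgenequiv}.) Thus the entire content of the theorem is the passage from the elementary inclusion $\R\preceq M$ to an isomorphism $M\cong\R$.

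This passage is the genuinely hard part, and it cannot rest on $M\equiv\R$ alone: it is not known whether $\R$ admits proper separable elementary extensions, so the finitely generic hypothesis must be exploited in an essential way. My proposed route is to show that a finitely generic embeddable factor $M$ has the \emph{Jung property}, i.e.\ any two embeddings of $M$ into $\R^\u$ are unitarily conjugate. Granting this, one invokes the characterization (\cite{AGK}) that an embeddable II$_1$ factor has the Jung property if and only if it is amenable, i.e.\ isomorphic to $\R$, and the theorem is immediate.

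To establish the Jung property I would proceed as follows. Using the isomorphism $M^\u\cong\R^\u$ afforded by $M\equiv\R$, it suffices to show that any two embeddings $\iota_1,\iota_2\colon M\hookrightarrow M^\u$ are unitarily conjugate. Since the finitely generic embeddable factors are extensive in the embeddable factors, one may embed $M^\u$ into a finitely generic embeddable factor $N$; then each image $\iota_j(M)\subseteq N$ is an inclusion of finitely generic embeddable factors, hence elementary by model-completeness, and the same holds for the diagonal copy of $M$. Inside the $\aleph_1$-saturated, strongly homogeneous model $N^\u$ (here our standing CH assumption is used) one then has separable elementary subfactors realizing the same type, so a back-and-forth argument conjugates $\iota_1$ to $\iota_2$ by an automorphism, which in the tracial ultraproduct setting can be improved to a unitary conjugacy.

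The main obstacle is exactly this final step: converting the elementarity supplied by model-completeness into an honest unitary conjugacy of embeddings. The delicate point is that the elementarity of $\iota_j(M)\subseteq N$ is witnessed in $N^\u$, whereas the Jung property is a statement about $M^\u\cong\R^\u$; one must reflect the conjugating unitary back from $N^\u$ to $M^\u$ without disturbing the diagonal image, and it is here that the full strength of the finitely generic hypothesis—rather than mere existential closedness or mere elementary equivalence to $\R$—is consumed. I expect this reflection/conjugacy step, together with verifying that the resulting automorphism can be taken to be unitarily implemented, to be where essentially all the work lies.
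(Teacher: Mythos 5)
First, a point of comparison: the paper offers no proof of this statement at all --- it is quoted from the manuscript \cite{AGK}, so there is no internal argument to measure your proposal against. Evaluated on its own merits, your reduction has the right first half ($M\equiv\R$ and $\R\preceq M$ come for free from model-completeness and the fact that $\R$ is finitely generic embeddable), and the general shape --- upgrade genericity to a Jung-type rigidity statement and then quote a ``Jung property implies amenable'' theorem --- is indeed the intended route. But two steps in your sketch are genuinely broken. The first is the appeal to extensivity of the class of finitely generic embeddable factors in order to embed $M^\u$ into a finitely generic $N$. This fails for cardinality reasons ($M^\u$ is nonseparable under CH, while the members of the class are separable), and, worse, the class is \emph{not} extensive: the paper explicitly draws ``this class is far from extensive'' as a consequence of the very Fact you are trying to prove, so invoking extensivity here is circular as well as false. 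The correct move is to work directly with the two separable images $\iota_1(M),\iota_2(M)\subseteq M^\u\cong\R^\u$ and argue that each embedding is \emph{elementary} (using that $M$ is e.c.\ and finitely generic, so that it is e.c., hence elementary, in every model of its theory), after which $\aleph_1$-saturation and homogeneity of $\R^\u$ under CH conjugate them by an automorphism.

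The second and more serious gap is the sentence claiming the resulting automorphism ``can be improved to a unitary conjugacy.'' It cannot, at least not routinely: under CH the ultrapower $\R^\u$ has far more automorphisms than inner ones, and the distance between automorphic conjugacy of embeddings (the \emph{generalized} Jung property) and unitary conjugacy (the Jung property of \cite{jung}) is precisely the hard analytic content of \cite{AGK}. Their main theorem --- that an embeddable factor all of whose embeddings into $\R^\u$ are conjugate by automorphisms must already be amenable --- is what closes this gap, via an argument about the convex-like structure on $\operatorname{Hom}(N,\R^\u)$; it is not a soft consequence of the ultraproduct setting. So your model-theoretic machinery delivers only the generalized Jung property, and you must then cite the AGK theorem in that form (not Jung's unitary version, and not an unexplained ``improvement''). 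As a minor attribution point, the characterization ``unitary Jung property $\Leftrightarrow$ amenable'' is Jung's own theorem \cite{jung}, not \cite{AGK}; what \cite{AGK} adds, and what your argument actually needs, is the automorphism version.
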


\begin{rmk}
In the case of groups, the finitely generic and the infinitely generic groups are different (see \cite[Theorem 11]{Mac}).  Perhaps similar proofs could be used to negatively answer Question \ref{infgen}.
\end{rmk}


\begin{thebibliography}{99}

\bibitem{Scott} S. Atkinson, \textit{Convex sets associated to \cstar-algebras}. J. Funct. Anal. \textbf{271} (2016), 1604-1651.

\bibitem{AGK} S. Atkinson, I. Goldbring, and S. K. Elayavalli, \textit{On II$_1$ factors with the generalized Jung property}, manuscript in preparation.

\bibitem{mtfms} I. Ben Yaacov, A. Berenstein, C. W. Henson, and A. Usvyatsov, \textit{Model theory for metric structures}, Model theory with applications to algebra and analysis. 2, 315- 427, London Math. Soc. Lecture Note Ser. (350), Cambridge Univ. Press, Cambridge, 2008.

\bibitem{Brown} N. P. Brown, \textit{Topological dynamical systems associated to II$_1$ factors}, Adv. Math. \textbf{227}, 1665-1699, 2011. With an appendix by Narutaka Ozawa.

\bibitem{DL} J. Dixmier and E. C. Lance, \textit{Deux nouveaux facteurs de type II$_1$}, Invent. Math. \textbf{7} (1969), 226-234.

\bibitem{ecfactor} I. Farah, I. Goldbring, B. Hart, and D. Sherman, \textit{Existentially closed II$_1$ factors}, Fundamenta Mathematicae \textbf{233} (2016), 173-196. 

\bibitem{modeloperator3} I. Farah, B. Hart, and D. Sherman, \textit{Model theory of operator algebras III: elementary equivalence and II$_1$ factors}, Bull. Lond. Math. Soc. \textbf{46} (2014), 609-628.

\bibitem{Gold1} I. Goldbring, \textit{Enforceable operator algebras}, to appear in the Journal of the Institute of Mathematics of Jussieu.

\bibitem{Gold2} I. Goldbring, \textit{Spectral gap and definability}, to appear in Beyond First-order Model Theory Volume 2.  arXiv 1805.02752.

\bibitem{nomodcomp} I. Goldbring, B. Hart, and T. Sinclair, \textit{The theory of tracial von Neumann algebras does not have a model companion}, Journal of Symbolic Logic \textbf{78} (2013), 1000-1004.

\bibitem{jung} K. Jung, \textit{Amenability, tubularity, and embeddings into $R^\omega$}, Math. Ann \textbf{338} (2007), 241-248.

\bibitem{Mac} A. Macintyre, \textit{On algebraically closed groups}, Annals of Mathematics \textbf{96} (1972), 53-97.

\bibitem{quantum} Z. Ji, A. Natarajan, T. Vidick, J. Wright, and H. Yuen, \textit{MIP$^*=$RE}, arxiv 2001.04383.

\bibitem{popa} S. Popa, {\it Independence properties in subalgebras of ultraproduct} II$_1$ {\it factors}, Journal of Functional Analysis 
{\bf 266} (2014), 5818-5846.

\bibitem{usvy} A. Usvyatsov, \textit{Generic separable metric structures}, Topology and its Applications \textbf{155} (2008), 1607-1617.


\end{thebibliography}
\end{document}